\documentclass{article}
\usepackage{adjustbox}
\usepackage[a4paper, left=3.0cm,right=3.0cm,top=3.85cm,bottom=3.85cm]{geometry}
\usepackage{amsmath,amsthm,amssymb}
\usepackage{graphicx} 
\usepackage{subcaption}
\DeclareMathOperator*{\argmin}{arg min}
\DeclareMathOperator{\Span}{span}
\usepackage[numbers]{natbib} 
\usepackage[hidelinks]{hyperref}     
\usepackage{doi}
\usepackage{authblk}
\usepackage{xcolor}
\usepackage{hyperref}
\usepackage{cleveref}
\date{}
\title{Symplecticity-Preserving Prediction of parameter-dependent Hamiltonian Dynamics by Generalized Kernel Interpolation}
\author[1]{Robin Herkert \thanks{\texttt{robin.herkert@ians.uni-stuttgart.de}}}
\author[2]{Tobias Ehring \thanks{\texttt{tobias.ehring@ians.uni-stuttgart.de}}}
\author[3]{Bernard Haasdonk\thanks{\texttt{haasdonk@ians.uni-stuttgart.de}}}
\affil[1,2,3]{Institute of Applied Analysis and Numerical Simulation, University of Stuttgart, Pfaffenwaldring 57,
70569 Stuttgart, Baden-Württemberg, Germany}

\begin{document}
\maketitle
\newcommand{\p}{\mathbf{p}}
\newcommand{\q}{\mathbf{q}}
\renewcommand{\P}{\mathbf{P}}
\newcommand{\Q}{\mathbf{Q}}
\newcommand{\A}{\mathbf{A}}
\newcommand{\B}{\mathbf{B}}
\newcommand{\C}{\mathbf{C}}
\newcommand{\D}{\mathbf{D}}

\newcommand{\Ham}{\mathcal{H}}
\newcommand{\R}{\mathbb{R}}
\newcommand{\N}{\mathbb{N}}
\newcommand{\openQ}[1]{{\color{red} Q: #1}}
\newtheorem{theorem}{Theorem}
\newtheorem{lemma}{Lemma}
\newtheorem{corollary}{Corollary}
\newtheorem{definition}{Definition}
\newtheorem{remark}{Remark}
\newtheorem{proposition}{Proposition}
\newcommand{\Hk}{H_k(\Omega)}
\newcommand{\ip}[2]{\langle #1,#2\rangle_{\Hk}}
\newcommand{\norm}[1]{\left\lVert #1\right\rVert_{\Hk}}

\begin{abstract}
We extend the kernel-based symplectic predictor of \cite{herkert2026kernel}
to a parameter-augmented setting in which the learned flow-map surrogate
depends not only on the state, but also on additional variables such as
physical parameters and macro time-step sizes. The method uses a product
kernel ansatz on a parameter and macro step augmented domain and constructs the prediction through an
implicit symplectic-Euler-type update. Hence, for every fixed admissible
parameter and time-step instance, the resulting large-step predictor is
symplectic by construction. The training problem is formulated as gradient
Hermite--Birkhoff interpolation in a reproducing kernel Hilbert space. Efficient
surrogates are obtained by greedy center selection. We show that the
convergence analysis from the non-augmented setting carries over to the
product-kernel framework and derive corresponding prediction error bounds.
Numerical experiments for a pendulum with varying length and time-step size
and for a parameter-dependent discretized wave equation illustrate the
accuracy and structure-preserving behavior of the proposed approach.
\end{abstract}

\textbf{Keywords:} Kernel methods, Greedy methods, Hamiltonian systems, Symplectic integrators

\section{Introduction}

Learning flow maps directly from trajectory data has emerged as a powerful alternative to first identifying governing equations or vector fields and then integrating them numerically. Rather than approximating the right-hand side of a dynamical system, one learns the evolution operator itself, i.e., a map that propagates the state over a finite time interval. This viewpoint avoids numerical differentiation of data, naturally accommodates for coarse temporal sampling, and yields predictors that advance the system over large time horizons in a single step. Direct flow-map learning has been successfully used for unknown dynamical systems and for approximating evolution operators of differential equations; see, e.g., \cite{qin2019data,churchill2023flowmap,boffi2025selfdistill}. In this work, we learn flow maps that are tailored to the structure of Hamiltonian systems. Many conservative phenomena in areas such as classical mechanics, theoretical chemistry, and molecular dynamics admit such a formulation. In canonical coordinates \(x=(q,p)\in\R^{2n}\), with parameter \(\mu\in\mathcal P\), we study systems of the form
\begin{align}\label{eqn:Hamsys}
  \dot x(t)=J_{2n}\nabla_x \Ham(x(t);\mu),
  \qquad
  x(0)=x_0,
\end{align}
where
\[
J_{2n}:=
\begin{bmatrix}
0_n & I_n\\
-I_n & 0_n
\end{bmatrix}
\]
denotes the canonical Poisson matrix. For fixed \(\mu\in\mathcal P\) and \(\Delta T\ge 0\), we write
\[
\Phi_\mu^{\Delta T}(x_0):=x(\Delta T;x_0,\mu)
\]
for the associated flow map. 
A defining feature of Hamiltonian systems is that the associated flow maps are symplectic, i.e.,
\[
\bigl(D\Phi_\mu^{\Delta T}(x)\bigr)^\top J_{2n} D\Phi_\mu^{\Delta T}(x)=J_{2n}
\qquad\text{for all }x\in\R^{2n}.
\]
Symplectic maps possess fundamental geometric properties; see, e.g., \cite{daSilva2008}. In particular, they preserve phase-space volume, are locally invertible, and are closed under composition. These structural properties are closely tied to the favorable long-time behavior of Hamiltonian systems and should therefore also be respected by data-driven approximations of their flow maps.

In practice, exact flow maps are rarely available in closed form and must be approximated numerically. To preserve the geometric structure of the dynamics, one commonly employs symplectic integrators \cite{hairer2006}. A basic example is the symplectic Euler method,
\begin{align}\label{eq:symplEuler}
  x_{i+1}
  =
  x_i+\Delta t\,J_{2n}\nabla \Ham(q_i,p_{i+1}),
\end{align}
whose update map is symplectic for fixed \(\Delta t>0\) under suitable solvability assumptions; see \cite[Theorem~3.3]{hairer2006}. Moreover, backward-error analysis explains why symplectic time integrators often exhibit near-conservation of energy over very long time intervals \cite{hairer2006}. Even so, structure-preserving time-stepping can become computationally expensive when stability or accuracy demands require very small step sizes. This motivates the direct learning of large-step flow maps.

For Hamiltonian systems, the challenge is not only to learn an accurate predictor, but also to preserve the symplectic structure of the exact flow. A first line of research addresses this by learning an underlying Hamiltonian or vector field and then combining it with a structure-preserving integrator. Hamiltonian Neural Networks (HNNs) \cite{greydanus2019hamiltonian} learn a scalar Hamiltonian surrogate \(\Ham_{\mathrm{NN}}\) and recover the dynamics via
\[
\dot x = J_{2n}\nabla \Ham_{\mathrm{NN}}(x).
\]
Related approaches include Gaussian-process-based learning of Hamiltonian systems \cite{bertalan2019learning} and separable-Hamiltonian models combined with symplectic time-stepping \cite{Chen2020Symplectic}. A second line of work seeks to learn the Hamiltonian flow map itself as a symplectic map, without explicitly recovering a physical Hamiltonian. SympNets \cite{jin2020sympnets} approximate the time-stepping map by composing simple symplectic building blocks. H\'enonNets \cite{burby2020fast} pursue a related strategy based on concatenations of H\'enon-like maps. Generating Function Neural Networks (GFNNs) \cite{chen2021data} learn a generating function from which a symplectic map can be constructed directly. More recently, Generalized Hamiltonian Neural Networks (GHNNs) \cite{Horn2024} have provided a unifying perspective encompassing several earlier symplectic neural architectures. In more recent works such ideas have been extended to parameter- and time-dependent settings. Parametric Generalized Hamiltonian Neural Networks generalize the GHNN framework to parameter-dependent systems \cite{horn6427896parametric}, while time-adaptive SympNets address variable step sizes and time dependence \cite{janik2025time}. This makes parameter dependence and variable macro time-steps natural next steps in structure-preserving flow-map learning. Therefore, the present work extends our kernel-based approach \cite{herkert2026kernel} to parameter dependence and variable macro time-step sizes. Compared with neural network architectures, kernel methods offer several attractive features. They lead to reproducing kernel Hilbert space (RKHS)-based learning problems
with closed-form solutions, provide direct access to derivatives through
reproducing identities. Further, they come with a rigorous approximation theory covering
Hermite--Birkhoff (HB) interpolation and greedy strategies
\cite{wendland_2004}. Moreover, kernel methods have shown strong practical
performance in surrogate modeling tasks
\cite{carlberg2019recovering,doeppel2024goal}.
Hermite-type kernel interpolation arises in many applications, including PDE discretization \cite{la2008double} and image reconstruction \cite{de2018image}. 

In our recent work \cite{herkert2026kernel}, we constructed a kernel-based symplectic predictor by learning a differentiable scalar surrogate \(s:\R^{2n}\to\R\) and inserting its gradient into the implicit update
\begin{align}\label{eqn:pred_intro}
  x_{\Delta T,\mathrm{pred}}
  =
  x_0+\Delta T\,J_{2n}\nabla s(q_0,p_{\Delta T,\mathrm{pred}}),
\end{align}
where \(x_0=(q_0,p_0)\) and \(x_{\Delta T,\mathrm{pred}}=(q_{\Delta T,\mathrm{pred}},p_{\Delta T,\mathrm{pred}})\). Since this construction reflects the symplectic Euler method, whose update map
is symplectic under suitable solvability assumptions, the resulting update map 
is symplectic by construction. To train the surrogate, one starts from data pairs
\[
x_0^j=(q_0^j,p_0^j),\qquad
x_{\Delta T}^j=(q_{\Delta T}^j,p_{\Delta T}^j)=\Phi^{\Delta T}(x_0^j),
\qquad j=1,\dots,M,
\]
and introduces the mixed variables and targets
\[
\xi_j:=(q_0^j,p_{\Delta T}^j),
\qquad
y_j:=J_{2n}^\top\frac{x_{\Delta T}^j-x_0^j}{\Delta T}.
\]
With this choice, \eqref{eqn:pred_intro} results in the gradient relation
\begin{equation}\label{HBint}
\nabla s(\xi_j)=y_j,
\qquad j=1,\dots,M,  
\end{equation}
so that learning the surrogate reduces naturally to a gradient HB interpolation problem. In particular, the model is trained to reproduce the gradient information entering the symplectic update, rather than the Hamiltonian itself.

The present paper extends the kernel-based symplectic predictor of
\cite{herkert2026kernel} from a fixed, parameter-independent setting to a
parameter-augmented framework. The main contribution is the construction of a
single product-kernel surrogate on the augmented domain
\(\Omega_\xi\times\mathcal P\times\mathcal T\) instead of training separate models for each parameter and macro step setting.
The current approach represents a family of
large-step flow-map predictors depending on the physical parameter \(\mu\) and
the macro time-step size \(\Delta T\). For evaluating and training the predictor via \eqref{eqn:pred_intro}, differentiation is performed only with
respect to the mixed state variables, so that each fixed admissible pair
\((\mu,\Delta T)\) yields an implicit symplectic-Euler-type update. Consequently,
the learned predictor preserves symplecticity by construction for every
parameter and time-step instance, while allowing parameter- and step-size
dependent dynamics to be learned within one surrogate model.

The remainder of the paper is organized as follows. In \Cref{Sec:Background}, we provide an introduction to generalized kernel interpolation, greedy kernel methods and state the convergence results for the symplectic predictor. Numerical experiments are presented in \Cref{Sec:Numerics}, and we conclude with a summary and outlook in \Cref{Sec:Conclusion}.

\section{Background on Generalized Kernel Interpolation}\label{Sec:Background}
Let \(\mathcal X\) be a nonempty set. A kernel on \(\mathcal X\) is a symmetric function $k:\mathcal X\times\mathcal X\to\R.$
Given a finite set of pairwise distinct points
$Z_M:=\{z_1,\dots,z_M\}\subset\mathcal X,$
the matrix
\[
G_{Z_M}:=\bigl(k(z_i,z_j)\bigr)_{i,j=1}^M\in\R^{M\times M}
\]
is called the Gramian matrix of \(k\) with respect to \(Z_M\). We call \(k\) positive definite (p.d.) if, for every \(M\in\N\) and every finite set
\(Z_M\subset\mathcal X\), the matrix \(G_{Z_M}\) is positive semidefinite. We call \(k\)
strictly positive definite (s.p.d.) if, for every \(M\in\N\) and every finite set
\(Z_M\subset\mathcal X\) consisting of pairwise distinct points, the matrix \(G_{Z_M}\) is positive definite. An RKHS \(H_k(\mathcal X)\) over \(\mathcal X\) is a Hilbert space of functions
\(f:\mathcal X\to\R\) for which all point-evaluation functionals are continuous. For every RKHS there exists a unique function $k$, the so called reproducing kernel, such that \(k(\cdot,z)\in H_k(\mathcal X)\) and
\[
f(z)=\langle f,k(\cdot,z)\rangle_{H_k(\mathcal X)}
\qquad\text{for all }f\in H_k(\mathcal X),\ z\in\mathcal X.
\]
Conversely, every p.d.\ kernel induces a unique RKHS, so that p.d.\ kernels and RKHSs are in one-to-one correspondence. 

In the following, we will recast the interpolation problem \eqref{HBint} into a generalized interpolation problem in an RKHS \(H_k(\mathcal X)\) which is given by 
\begin{align}\label{eq:min_norm_problem}
  s_M
  =
  \argmin_{s \in H_k(\mathcal X)}
  \left\{
   \|s\|_{H_k(\mathcal X)}
   \ \big|\
   \lambda_j(s)=y_j \text{ for } j=1,\dots,M
  \right\}.
\end{align}
Here, \(\lambda_1,\dots,\lambda_M \in H_k(\mathcal X)'\) are linearly independent continuous linear functionals, and \(y_1,\dots,y_M\in\R\) are prescribed target values. By the Riesz representation theorem, for each \(\lambda_i\) there exists a unique \(v_i\in H_k(\mathcal X)\) such that
\[
\lambda_i(f)=\langle f,v_i\rangle_{H_k(\mathcal X)}
\qquad\text{for all }f\in H_k(\mathcal X).
\]
If \(k\) is the reproducing kernel of \(H_k(\mathcal X)\), then \(v_i\) admits the explicit representation
\[
v_i(z)
=
\bigl(\lambda_i^{(2)}k\bigr)(z)
=
\lambda_i\bigl(k(z,\cdot)\bigr),
\]
that is, \(\lambda_i\) acts on the second argument of \(k\). For the vector of functionals
$\Lambda_M=[\lambda_1,\dots,\lambda_M],$
we define the generalized Gramian matrix \(G_\Lambda\in\R^{M\times M}\) by
\[
(G_\Lambda)_{ij}
:=
\langle v_j,v_i\rangle_{H_k(\mathcal X)}
=
\lambda_i^{(1)}\lambda_j^{(2)}k,
\qquad i,j=1,\dots,M.
\]
By \cite[Theorem~16.1]{wendland_2004}, if \(\lambda_1,\dots,\lambda_M\) are linearly independent on \(H_k(\mathcal X)\), then for every \(y\in\R^M\) there exists a unique minimum-norm interpolant in the sense of \eqref{eq:min_norm_problem}, and it is given by
\[
s(\cdot)
=
\sum_{j=1}^M c_j\,v_j(\cdot)
=
\sum_{j=1}^M c_j\,\lambda_j^{(2)}k(\cdot,\cdot),
\qquad\text{where}\qquad
G_\Lambda c = y \qquad\text{with}\qquad
c:=(c_j)_{j=1}^M.
\]

\medskip

A special case of this generalized interpolation problem is the HB interpolation underlying our training procedure. In the present parameter-augmented setting, let
\[
Z_M=\{z_j\}_{j=1}^M\subset\mathcal X,
\qquad
z_j=(\xi_j,\mu_j,{\Delta T}_j),
\qquad
z_j\in\mathcal X,
\]
and let
\[
\alpha_j\in\{1,\dots,2n\},
\qquad j=1,\dots,M,
\]
be state-coordinate indices. We then define the sampling functionals
\begin{equation}\label{eqn:sampling}
\lambda_j(f)
:=
\lambda_{j,\alpha_j}(f)
:=
\partial_{\xi_{\alpha_j}}f(z_j),
\qquad j=1,\dots,M.
\end{equation}
Hence, the interpolation conditions involve derivatives only with respect to the state variables.

In the present work, the surrogate depends on mixed state variables as well as on additional parameter and time-step variables. More precisely, we consider an augmented domain of the form
\begin{equation}\label{def:omega}
\Omega := \Omega_{\xi} \times \mathcal P \times \mathcal T,  
\end{equation}
where \(\Omega_{\xi} \subset \R^{2n}\) denotes the domain of mixed state variables
\(\xi=(q_0,p_{\Delta T})\), \(\mathcal P \subset \R^{d_\mu}\) is a parameter domain, and
\(\mathcal T \subset (0,\infty)\) is the macro time-step domain. To incorporate physical parameters and macro time-step sizes into the interpolation problem, we employ a product kernel on \(\Omega\) of the form
\begin{equation}\label{eq:product_kernel_background}
k\bigl((\xi,\mu,{\Delta T}),(\xi',\mu',{\Delta T}')\bigr)
=
k_{\xi}(\xi,\xi')\,k_\mu(\mu,\mu')\,k_{\Delta T}({\Delta T},{\Delta T}'),
\end{equation}
where \(k_{\xi}\) is a kernel on the mixed-state domain \(\Omega_{\xi}\), and \(k_\mu\), \(k_{\Delta T}\) are kernels on \(\mathcal P\) and \(\mathcal T\), respectively. If the kernel factors \(k_\xi\), \(k_\mu\), and \(k_{\Delta T}\) are positive
definite on their respective domains, then the product kernel $k$ is positive definite on the augmented domain
\(\Omega_\xi\times\mathcal P\times\mathcal T\). In the considered setting, differentiation is taken only with respect to the state block
\(\xi\in\Omega_{\xi}\subset\R^{2n}\), while \((\mu,{\Delta T})\in\mathcal P\times\mathcal T\) enter as additional arguments. If \(k\in C^2(\Omega\times\Omega)\), then the first-order partial derivative point-evaluation functionals with respect to the state variables are continuous and satisfy the derivative reproducing property
\begin{equation}\label{eqn:rep_deriv}
\partial_{\xi_\ell} f(\xi,\mu,{\Delta T})
=
\big\langle
f,\,
\partial_{\xi_\ell}^{(2)}k\bigl(\cdot,(\xi,\mu,{\Delta T})\bigr)
\big\rangle_{H_k(\Omega)}
\quad
\text{for all }f\in H_k(\Omega),\ (\xi,\mu,{\Delta T})\in\Omega,\ \ell=1,\dots,2n,
\end{equation}
where \(\partial_{\xi_\ell}^{(2)}\) denotes differentiation of \(k\) with respect to the \(\ell\)-th state coordinate of its second argument.
If \(k\in C^2(\Omega\times\Omega)\), then these functionals are continuous on
\(H_k(\Omega)\) by \eqref{eqn:rep_deriv} and the Cauchy--Schwarz inequality. In the following, we assume that the
HB sampling functionals \(\lambda_1,\dots,\lambda_M\) are linearly independent
on \(H_k(\Omega)\). Under this assumption, the generalized
interpolation problem is well posed for the product-kernel space.

For a product kernel of the form \eqref{eq:product_kernel_background}, the derivative representers factorize as
\[
\partial_{\xi_\ell}^{(2)}
k\bigl((\xi,\mu,{\Delta T}),(\xi',\mu',{\Delta T}')\bigr)
=
k_\mu(\mu,\mu')\,k_{\Delta T}({\Delta T},{\Delta T}')\,
\partial_{\xi_\ell}^{(2)}k_{\xi}(\xi,\xi').
\]
Thus, the parametric dependence appears only through multiplicative kernel factors, whereas the derivative structure is entirely determined by the state kernel \(k_\xi\). Moreover, the Riesz representers of the HB functionals are then given by
\[
v_j(\xi,\mu,{\Delta T})
=
\partial_{\xi_{\alpha_j}}^{(2)}
k\bigl((\xi,\mu,{\Delta T}),(\xi_j,\mu_j,{\Delta T}_j)\bigr)
=
k_\mu(\mu,\mu_j)\,k_{\Delta T}({\Delta T},{\Delta T}_j)\,
\partial_{\xi_{\alpha_j}}^{(2)}k_{\xi}(\xi,\xi_j),
\]
and the corresponding generalized Gramian entries take the form
\[
(G_\Lambda)_{ij}
=
\partial_{\xi_{\alpha_i}}^{(1)}\partial_{\xi_{\alpha_j}}^{(2)}k(z_i,z_j)
=
k_\mu(\mu_i,\mu_j)\,k_{\Delta T}({\Delta T}_i,{\Delta T}_j)\,
\partial_{\xi_{\alpha_i}}^{(1)}\partial_{\xi_{\alpha_j}}^{(2)}k_{\xi}(\xi_i,\xi_j).
\]
Hence, the interpolant can be written as
\begin{equation}\label{eqn:surr}
s(z)
=
\sum_{j=1}^M c_j\,
\partial_{\xi_{\alpha_j}}^{(2)}k(z,z_j),
\qquad z=(\xi,\mu,{\Delta T})\in\Omega,
\end{equation}
where the coefficients \(c_j\), \(j=1,\dots,M\), are determined by the linear system
$G_\Lambda c = y,$ with $c:=(c_j)_{j=1}^M.$

\subsection{Sparse greedy approximation}

For large \(M\), solving the dense system \(G_\Lambda c=y\) and evaluating \eqref{eqn:surr} can become computationally expensive and numerically ill-conditioned. A common strategy is therefore to replace the full set of functionals by a much smaller subset
\[
\Lambda_{m_{\max}}\subset\Lambda_M,
\qquad
m_{\max}\ll M.
\]
Greedy algorithms, such as VKOGA~\cite{Santin2021VKOGA,wirtz2013vectorial}, construct such subsets iteratively according to a selection rule based on the current interpolant. A common choice is the \(f\)-greedy rule. Starting from \(\Lambda_0=\emptyset\) and \(s_{f,\Lambda_0}=0\), one selects at iteration \(m\ge 0\)
\begin{equation}\label{eq:f_greedy}
 \lambda_{m+1}
 \in \operatorname*{argmax}_{\lambda_j \in \Lambda_M}
 \left|y_j-\lambda_j(s_{f,\Lambda_m})\right|,
 \qquad
 \Lambda_{m+1}:=\Lambda_m\cup\{\lambda_{m+1}\},
\end{equation}
where the residuals at previously selected functionals vanish, i.e.,
\[
y_j-\lambda_j(s_{f,\Lambda_m})=0,
\qquad j=1,\dots,m.
\]
In the present HB setting, this amounts to iteratively selecting state-derivative evaluations at augmented sample points
\[
(z_i,\ell_i)\in \Omega\times\mathcal J,
\qquad
\mathcal J:=\{1,\dots,2n\},
\]
where \(\ell_i\) denotes a state-coordinate index. For a finite selection
\[
\{(z_i,\ell_i)\}_{i=1}^m\subset\Omega\times\mathcal J,
\]
and with $V_m
:=
\Span\left\{
\partial_{\xi_{\ell_i}}^{(2)}k(\cdot,z_i)
:
i=1,\dots,m
\right\}$
we define
\begin{equation}\label{def:projector}
\Pi_m:H_k(\Omega)\to V_m,
\end{equation}
to be the orthogonal projector onto \(V_m\).
Given a target function \(u\in H_k(\Omega)\), we set
\[
s_m:=\Pi_m u,
\qquad
e_m:=u-s_m.
\]
In an idealized continuous formulation, the \(f\)-greedy selection rule can be
written as
\[
(z_{m+1},\ell_{m+1})
\in
\operatorname*{argmax}_{z\in\Omega,\ \ell\in\mathcal J}
\left|\partial_{\xi_\ell}e_m(z)\right|.
\]
In the computational setting, this maximization is restricted to the finite
candidate set of available sampling functionals. After selecting
\((z_{m+1},\ell_{m+1})\), we set \(s_{m+1}:=\Pi_{m+1}u\).

Greedy schemes yield sparse surrogates and come with rigorous, in some settings even optimal, convergence guarantees~\cite{wenzel2023analysis,santin2024optimality}. Recent refinements of this theory sharpen the known convergence rates for target-data-dependent greedy generalized interpolation with Sobolev kernels by removing an additional logarithmic factor in the error bounds \cite{Santin2026Ref}. For HB interpolation, greedy convergence has been for example studied in
\cite{ehring2024hermite} where convergence of a target-dependent greedy Hermite kernel scheme based on an \(f\)-type selection rule for value-function surrogates is proven in optimal control. Complementarily, \cite{albrecht2025convergence} develops a convergence theory for generalized kernel-based interpolation with totally bounded sets of sampling functionals, which in particular covers HB interpolation. For the non-augmented symplectic predictor, convergence rates for gradient-HB interpolation were established in \cite{herkert2026kernel}. The present paper extends this analysis to the parameter-augmented setting by working on the product domain \(\Omega\) and differentiating only with respect to the state variables.

To have a well-defined 
interpolation problem in \eqref{HBint} (in the sense that existence of the target function is ensured), one needs the notion of a type~II generating function. 
Therefore, we extend the notion of a type~II generating function to the augmented domain \(\Omega\). For background on generating functions, see \cite{cline2017variational}.

\begin{definition}[Type~II generating function]\label{def:typeII}
Let \(\mathcal P\subset\R^{d_\mu}\) and \(\mathcal T\subset(0,\infty)\) be parameter and time-step domains, and let
\[
\Phi_\mu^{\Delta T} : D_{\mu,{\Delta T}}\to\R^{2n}
\]
be a family of Hamiltonian flow maps. Define
$I_1:=
\begin{bmatrix}
I_n & 0\\
0 & 0
\end{bmatrix}, 
I_2:=
\begin{bmatrix}
0 & 0\\
0 & I_n
\end{bmatrix}$
and let $I_1x_0+I_2\Phi_\mu^{\Delta T}(x_0) \in \Omega.$
A function
\[
S\in H_k(\Omega)\cap C^1(\Omega),
\qquad
\Omega\subset \R^{2n}\times\mathcal P\times\mathcal T,
\]
is called a type~II generating function with parameter and time-step variables if, for every admissible
\((x_0,\mu,{\Delta T})\) with \(x_0\in D_{\mu,{\Delta T}}\),
\begin{equation}\label{eqn:mixed-training_augm}
J_{2n}^{\top}
\frac{\Phi_\mu^{\Delta T}(x_0)-x_0}{{\Delta T}}
=
\nabla_\xi S\!\left(I_1x_0+I_2\Phi_\mu^{\Delta T}(x_0),\,\mu,{\Delta T}\right),
\end{equation}
where the gradient is taken only with respect to the mixed state variable
\(\xi=(q_0,p_{\Delta T})\).
\end{definition} 
In \cite{herkert2026kernel} we provide conditions under which such a type~II generating function exists in the non-augmented setting. 
We next state the parameter-augmented analogues of the non-augmented convergence results. The first proposition gives a convergence estimate for the state-gradient approximation produced by the gradient-HB \(f\)-greedy procedure on the augmented domain. The second proposition shows how this gradient error propagates to the prediction error of the parameter-augmented symplectic predictor.

\begin{proposition}[Convergence rate for parameter-augmented gradient-HB interpolation]
\label{thm:HB-augm}
Let \(\Omega\) and \(k\) be defined as in \eqref{def:omega} and
\eqref{eq:product_kernel_background}. Assume that \(k\) is twice continuously
differentiable with respect to the state variables in both arguments and that
\(S\in H_k(\Omega)\). Let \(s_m\in H_k(\Omega)\) be obtained by the
gradient-HB \(f\)-greedy procedure on the augmented domain, where the sampling
functionals are defined as in \eqref{eqn:sampling}. Let
\(\Pi_m:H_k(\Omega)\to V_m\) denote the orthogonal projector defined in
\eqref{def:projector}.

For a candidate state-derivative functional
\[
\lambda_{z,\ell}(f):=\partial_{\xi_\ell}f(z),
\qquad
z\in\Omega,\quad \ell\in\{1,\dots,2n\},
\]
define the associated generalized power function by
\[
P_{\Lambda_m}(z,\ell)
:=
\left\|
\lambda_{z,\ell}^{(2)}k
-
\Pi_m\bigl(\lambda_{z,\ell}^{(2)}k\bigr)
\right\|_{H_k(\Omega)} .
\]
For \(m\ge 1\), choose an index \(i_m\in\{m+1,\dots,2m\}\) such that
\[
\|\nabla_\xi e_{i_m}\|_{L^\infty(\Omega)}
=
\min_{m+1\le i\le 2m}
\|\nabla_\xi e_i\|_{L^\infty(\Omega)},
\qquad
e_i:=S-s_i .
\]
Then
\[
\|\nabla_\xi e_{i_m}\|_{L^\infty(\Omega)}
\le
\sqrt{2n}\,m^{-1/2}\,\|e_{m+1}\|_{H_k(\Omega)}
\left[
\prod_{i=m+1}^{2m}
P_{\Lambda_i}(z_{i+1},\ell_{i+1})
\right]^{1/m}.
\]
\end{proposition}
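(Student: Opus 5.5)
We follow the standard abstract analysis of target-data-dependent $f$-greedy algorithms, as in \cite{wenzel2023analysis} and as used in \cite{herkert2026kernel}. The augmented domain enters only through the Hilbert space $H_k(\Omega)$ and the family of functionals $\lambda_{z,\ell}$, which remain continuous by \eqref{eqn:rep_deriv}.

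\emph{Step 1: energy decrease.} Since $s_{i+1}=\Pi_{i+1}S$ and $V_i\subset V_{i+1}$, we have $e_{i+1}=e_i-\Pi_{i+1}e_i$, and $\|e_i\|^2-\|e_{i+1}\|^2=\|\Pi_{i+1}e_i\|^2$ in $H_k(\Omega)$. The space $V_{i+1}$ is $V_i$ plus the direction $w:=\lambda_{i+1}^{(2)}k-\Pi_i(\lambda_{i+1}^{(2)}k)$, which is orthogonal to $V_i$ and has norm $P_{\Lambda_i}(z_{i+1},\ell_{i+1})$. Because $e_i\perp V_i$, we get
\[
\langle e_i,w\rangle_{H_k(\Omega)}=\lambda_{i+1}(e_i)=\partial_{\xi_{\ell_{i+1}}}e_i(z_{i+1}).
\]
Therefore
\[
\|e_i\|^2-\|e_{i+1}\|^2=\frac{|\partial_{\xi_{\ell_{i+1}}}e_i(z_{i+1})|^2}{P_{\Lambda_i}(z_{i+1},\ell_{i+1})^2}.
\]
The $f$-greedy rule in its idealized continuous form makes the numerator equal to $\max_{z,\ell}|\partial_{\xi_\ell}e_i(z)|^2$. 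This maximum is at least $\frac{1}{2n}\|\nabla_\xi e_i\|_{L^\infty(\Omega)}^2$ if the $L^\infty$ norm of the gradient is taken as the sup of the Euclidean norm; with the max-norm convention the factor is $1$. This comparison is where $\sqrt{2n}$ comes from. Writing $r_i:=\max_{z,\ell}|\partial_{\xi_\ell}e_i(z)|$ and $P_i:=P_{\Lambda_i}(z_{i+1},\ell_{i+1})$, we obtain $r_i^2\le P_i^2(\|e_i\|^2-\|e_{i+1}\|^2)$.

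\emph{Step 2: telescoping with a geometric mean.} Sum over $i=m+1,\dots,2m$. The telescoping sum is at most $\|e_{m+1}\|^2$. Bounding $\prod r_i^{2/m}$ by the AM–GM inequality applied to $r_i^2/P_i^2$ gives
\[
\Big(\prod_{i=m+1}^{2m} r_i\Big)^{2/m}\le\Big(\prod_{i=m+1}^{2m}P_i\Big)^{2/m}\frac1m\sum_{i=m+1}^{2m}\frac{r_i^2}{P_i^2}\le m^{-1}\|e_{m+1}\|^2\Big(\prod_{i=m+1}^{2m}P_i\Big)^{2/m}.
\]
We assume $P_i>0$, which holds whenever $\lambda_{i+1}$ is not already in the span; otherwise the error vanishes and the claim is trivial. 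The minimum over the block is at most the geometric mean, so $\min_i r_i\le m^{-1/2}\|e_{m+1}\|(\prod P_i)^{1/m}$. Combined with $\|\nabla_\xi e_i\|_{L^\infty}\le\sqrt{2n}\,r_i$, and noting that $i_m$ minimizes $\|\nabla_\xi e_i\|_{L^\infty}$, this yields the stated estimate.

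\emph{Main obstacle.} The delicate point is the identification of the one-step energy gain in Step 1, namely the orthogonality argument $\langle e_i,w\rangle=\lambda_{i+1}(e_i)$. Here we must check carefully that the derivative representers $\partial^{(2)}_{\xi_\ell}k(\cdot,z)$ lie in $H_k(\Omega)$ for the product kernel. This requires only $C^2$ regularity in the state variables, with the parameter factors $k_\mu,k_{\Delta T}$ acting as constants. The rest of the argument is a verbatim transfer of the non-augmented proof in \cite{herkert2026kernel}.
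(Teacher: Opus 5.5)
Your proof is correct and takes the same route as the paper. The paper's proof only asserts that the projection-based $f$-greedy argument from the non-augmented setting carries over to $H_k(\Omega)$: the energy identity $\|e_i\|^2-\|e_{i+1}\|^2=|\lambda_{i+1}(e_i)|^2/P_{\Lambda_i}(z_{i+1},\ell_{i+1})^2$, then telescoping and the AM--GM/geometric-mean bound, which you write out explicitly, together with the origin of the factor $\sqrt{2n}$ and the degenerate case $P_{\Lambda_i}(z_{i+1},\ell_{i+1})=0$.
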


\begin{proof}
The argument is the same projection-based \(f\)-greedy argument as in the
non-augmented gradient-HB setting, applied on the augmented domain
\(\Omega\). The sampling functionals are state-derivative evaluations, and
their Riesz representers are computed in the product-kernel RKHS
\(H_k(\Omega)\). Thus, the greedy residuals, orthogonal projections, and
generalized power functions have the same structure as in the
non-augmented case. Since the proof only uses this RKHS projection structure
and the corresponding power-function estimates, it carries over directly to
the present parameter-augmented setting.
\end{proof}

\begin{remark}
In the product-kernel setting, the parameter and time-step variables influence the convergence behavior through the generalized power function on the augmented domain.
\end{remark}

\begin{proposition}[Convergence rate for the parameter-augmented prediction error]\label{thm:pred-augm}
Let
\[
S\in C^2(\Omega)\cap H_k(\Omega)
\]
be a type~II generating function with parameter and time-step variables in the sense of Definition~\ref{def:typeII}, and let \(s_{i_m}\in H_k(\Omega)\) be the gradient-HB \(f\)-greedy interpolant from Proposition~\ref{thm:HB-augm}. Fix a compact set
\[
K\subset \R^{2n}\times\mathcal P\times\mathcal T
\]
and define
\[
{\Delta T}_{\max}:=\sup\{{\Delta T}:(x,\mu,{\Delta T})\in K\}.
\]
Assume the following:

\begin{enumerate}
\item[(i)] \textbf{Uniform solvability.}
For every \((x_0,\mu,{\Delta T})\in K\) and every \(m\) sufficiently large, the implicit equations
\[
y=x_0+{\Delta T} J_{2n}\nabla_\xi S(I_1x_0+I_2y,\mu,{\Delta T}),
\qquad
y=x_0+{\Delta T} J_{2n}\nabla_\xi s_{i_m}(I_1x_0+I_2y,\mu,{\Delta T})
\]
admit unique solutions \(y^\ast=\Phi_\mu^{\Delta T}(x_0)\) and \(y_m=x_{{\Delta T},i_m}(x_0,\mu)\), respectively.

\item[(ii)] \textbf{Uniform Lipschitz continuity of \(\nabla_\xi S\) with respect to \(y\).}
There exists a constant \(L_S>0\) such that
\[
\sup_{\substack{(x_0,\mu,\Delta T)\in K\\
y:\ (I_1x_0+I_2y,\mu,\Delta T)\in\Omega}}
\bigl\|
\nabla_\xi^2 S(I_1x_0+I_2y,\mu,\Delta T)\,I_2
\bigr\|_2
=: L_S <\infty .
\]
\end{enumerate}
Assume further that
\begin{equation}\label{eq:tau_condition_augm}
{\Delta T}_{\max}<\frac{1}{L_S}.
\end{equation}
Then, with $C_{\mathrm{pred}}
:=
\frac{1}{1-{\Delta T}_{\max}L_S},$
one has
\[
\sup_{(x_0,\mu,{\Delta T})\in K}
\|x_{{\Delta T},i_m}(x_0,\mu)-\Phi_\mu^{\Delta T}(x_0)\|_2
\le
C_{\mathrm{pred}}\,{\Delta T}_{\max}\,
\|\nabla_\xi e_{i_m}\|_{L^\infty(\Omega)}.
\]
In particular, by Proposition~\ref{thm:HB-augm},
\[
\sup_{(x_0,\mu,{\Delta T})\in K}
\|x_{{\Delta T},i_m}(x_0,\mu)-\Phi_\mu^{\Delta T}(x_0)\|_2
\le
C\,{\Delta T}_{\max}\,\sqrt{2n}\,m^{-1/2}\,\|e_{m+1}\|_{H_k(\Omega)}
\left[
\prod_{i=m+1}^{2m}
P_{\Lambda_i}(z_{i+1},\ell_{i+1})
\right]^{1/m},
\]
for some constant \(C>0\) independent of \(m\).
\end{proposition}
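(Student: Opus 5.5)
The plan is to compare the two implicit equations in assumption (i) pointwise and use a fixed-point style perturbation argument. Fix $(x_0,\mu,\Delta T)\in K$ and write $y^\ast=\Phi_\mu^{\Delta T}(x_0)$ and $y_m=x_{\Delta T,i_m}(x_0,\mu)$. Set $\xi^\ast:=I_1x_0+I_2y^\ast$ and $\xi_m:=I_1x_0+I_2y_m$. By Definition~\ref{def:typeII} and assumption (i), $y^\ast$ satisfies $y^\ast=x_0+\Delta T J_{2n}\nabla_\xi S(\xi^\ast,\mu,\Delta T)$, and $y_m$ satisfies the analogous equation with $s_{i_m}$ in place of $S$. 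Subtracting the two equations and inserting $\pm\nabla_\xi S(\xi_m,\mu,\Delta T)$ gives
\[
y_m-y^\ast=\Delta T\,J_{2n}\Bigl[\bigl(\nabla_\xi s_{i_m}-\nabla_\xi S\bigr)(\xi_m,\mu,\Delta T)+\nabla_\xi S(\xi_m,\mu,\Delta T)-\nabla_\xi S(\xi^\ast,\mu,\Delta T)\Bigr].
\]

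Next, I would bound the two terms separately. Since $J_{2n}$ is orthogonal, $\|J_{2n}v\|_2=\|v\|_2$. The first term is $-\nabla_\xi e_{i_m}(\xi_m,\mu,\Delta T)$, and I bound it by $\|\nabla_\xi e_{i_m}\|_{L^\infty(\Omega)}$. This step needs $(\xi_m,\mu,\Delta T)\in\Omega$, which I take as part of the well-posedness in (i), since the predictor is only evaluated on $\Omega$. Here the $L^\infty$ norm of the gradient is understood with respect to the Euclidean norm of the vector; if it is interpreted componentwise, an extra $\sqrt{2n}$ factor appears, and it is absorbed into $C$. For the second term I use the mean value theorem in integral form along the segment $t\mapsto I_1x_0+I_2(y^\ast+t(y_m-y^\ast))$, which only varies the $p$-block. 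This uses $S\in C^2$, gives
\[
\nabla_\xi S(\xi_m)-\nabla_\xi S(\xi^\ast)=\int_0^1\nabla_\xi^2S\bigl(I_1x_0+I_2(y^\ast+t(y_m-y^\ast)),\mu,\Delta T\bigr)I_2\,dt\,(y_m-y^\ast),
\]
and assumption (ii) bounds its norm by $L_S\|y_m-y^\ast\|_2$. The main obstacle is this step: (ii) is a supremum only over $y$ with $(I_1x_0+I_2y,\mu,\Delta T)\in\Omega$, so the segment must stay in $\Omega$. I would handle this by assuming that the $p$-sections of $\Omega$ are convex, as holds for instance when $\Omega_\xi$ is a box or convex set. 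Alternatively, I would read (ii) as a Lipschitz bound on $y\mapsto\nabla_\xi S(I_1x_0+I_2y,\mu,\Delta T)$ over the admissible set, which is exactly what the statement's heading says, and apply it directly.

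Combining the bounds gives $\|y_m-y^\ast\|_2\le \Delta T\,\|\nabla_\xi e_{i_m}\|_{L^\infty}+\Delta T L_S\|y_m-y^\ast\|_2$. Since $\Delta T\le\Delta T_{\max}<1/L_S$ by \eqref{eq:tau_condition_augm}, I rearrange to $\|y_m-y^\ast\|_2\le \frac{\Delta T}{1-\Delta T L_S}\|\nabla_\xi e_{i_m}\|_{L^\infty}\le C_{\mathrm{pred}}\Delta T_{\max}\|\nabla_\xi e_{i_m}\|_{L^\infty(\Omega)}$, using monotonicity of $t\mapsto t/(1-tL_S)$. The right-hand side does not depend on the point, so taking the supremum over $K$ gives the first claim. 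The second claim follows by inserting the bound of Proposition~\ref{thm:HB-augm} for $\|\nabla_\xi e_{i_m}\|_{L^\infty(\Omega)}$, with $C=C_{\mathrm{pred}}$ (times $\sqrt{2n}$ if the componentwise norm convention is used), which is independent of $m$.
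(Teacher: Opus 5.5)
Your proposal is correct and follows the paper's proof essentially step for step. Like the paper, you subtract the two implicit equations and split the gradient difference into the surrogate error $\nabla_\xi e_{i_m}$ at the predicted point plus the variation of $\nabla_\xi S$ in the implicit variable, bound that variation by $L_S\|y_m-y^\ast\|_2$, and absorb it using $\Delta T\le\Delta T_{\max}<1/L_S$. Your version is slightly more careful than the paper's, which drops the norm-preserving factor $J_{2n}$ in its displayed decomposition and silently assumes two things you make explicit: that $(I_1x_0+I_2y_m,\mu,\Delta T)\in\Omega$, and that the Hessian bound in (ii) acts as a Lipschitz bound along the segment between $y^\ast$ and $y_m$.
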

\begin{proof}
The proof follows the argument from the non-augmented case, applied
uniformly on the compact set \(K\). For fixed
\((x_0,\mu,\Delta T)\in K\), let
\[
y^\ast:=\Phi_\mu^{\Delta T}(x_0),
\qquad
y_m:=x_{\Delta T,i_m}(x_0,\mu).
\]
Subtracting the two implicit equations, we decompose the resulting gradient
difference into the surrogate approximation error and the variation of the
exact gradient with respect to the implicit variable. More precisely,
\[
\begin{aligned}
y_m-y^\ast
=
\Delta T
\Big[
&
\nabla_\xi S_{i_m}(I_1x_0+I_2y_m,\mu,\Delta T)
-
\nabla_\xi S(I_1x_0+I_2y_m,\mu,\Delta T)
\\
&+
\nabla_\xi S(I_1x_0+I_2y_m,\mu,\Delta T)
-
\nabla_\xi S(I_1x_0+I_2y^\ast,\mu,\Delta T)
\Big].
\end{aligned}
\]
Taking norms and using the uniform gradient error bound together with the
Lipschitz continuity of \(\nabla_\xi S\) with respect to \(y\), we obtain
\[
\|y_m-y^\ast\|_2
\le
\Delta T\|\nabla_\xi e_{i_m}\|_{L^\infty(\Omega)}
+
\Delta T L_S\|y_m-y^\ast\|_2 .
\]
Since \(\Delta T\le \Delta T_{\max}<1/L_S\), this yields
\[
\|y_m-y^\ast\|_2
\le
\frac{\Delta T}{1-\Delta T_{\max}L_S}
\|\nabla_\xi e_{i_m}\|_{L^\infty(\Omega)}
\le
C_{\mathrm{pred}}\Delta T_{\max}
\|\nabla_\xi e_{i_m}\|_{L^\infty(\Omega)}.
\]
Taking the supremum over \(K\) gives the first estimate, and the second follows
by inserting the bound from Proposition~\ref{thm:HB-augm}.
\end{proof}

\section{Numerical Experiments}\label{Sec:Numerics}
We evaluate the proposed parameter-augmented symplectic kernel predictor on benchmark Hamiltonian systems and compare its predictions with high-fidelity reference solutions. Throughout, reference trajectories are computed by the implicit midpoint rule with a sufficiently small micro time-step \(\Delta t = 10^{-3}\). The numerical experiments cover two settings: parameter-dependent problems with fixed macro time-step and fully augmented problems in which the macro time-step is treated as an additional variable. In particular, some experiments (\Cref{sec:wave}) are performed for fixed \(\Delta T\), whereas others (\Cref{sec:pendulum_param_time}) include \(\Delta T\) as an additional input variable in the surrogate model. Accordingly, the predictor is trained either on the augmented state--parameter domain \(\Omega_\xi\times\mathcal P\) or on the full augmented domain \(\Omega_\xi\times\mathcal P\times\mathcal T\). We report the following error measures.

\paragraph{(i) Maximum residual error versus number of centers.}
Let \(Z_{\mathrm{train}}\) and \(Z_{\mathrm{val}}\) denote the training and validation sets in the relevant augmented domain. For
\(Z\in\{Z_{\mathrm{train}},Z_{\mathrm{val}}\}\), we define
\[
E_Z(m)
:=
\max_{\ell\in\mathcal J}\ \max_{z\in Z}
\left|\partial_{\xi_\ell} e_m(z)\right|,
\]
that is, the worst-case mismatch in the state gradient over the corresponding data set after \(m\) greedy steps.

\paragraph{(ii) Relative trajectory error over time.}
For a test instance \((x_0,\mu,{\Delta T})\), we measure the deviation of the predicted trajectory from the reference solution on the macro time grid \(t_k=k\Delta T\) by
\[
e_{\mathrm{rel}}(t_k;x_0,\mu,\Delta T)
:=
\frac{\|x_{\mathrm{pred}}(t_k;x_0,\mu,\Delta T)-x_{\mathrm{ref}}(t_k;x_0,\mu)\|_2}
   {\|x_{\mathrm{ref}}(t_k;x_0,\mu)\|_2},
\]
where \(x_{\mathrm{ref}}(t_k;x_0,\mu)\) denotes the reference states, and \(x_{\mathrm{pred}}(t_k;x_0,\mu,\Delta T)\) are the states produced by the symplectic kernel predictor. In experiments with fixed \(\Delta T\), we suppress the dependence on \(\Delta T\) in the notation.

\medskip

Both the kernel family and the associated shape parameters are selected by validation. In the fully augmented setting, we consider product kernels of the form
\[
k\bigl((\xi,\mu,\Delta T),(\xi',\mu',\Delta T')\bigr)
=
k_\xi(\xi,\xi';\varepsilon_\xi)\,k_\mu(\mu,\mu';\varepsilon_\mu)\,k_{\Delta T}(\Delta T,\Delta T';\varepsilon_{\Delta T}).
\] 
For experiments with fixed \(\Delta T\), this reduces to a product kernel where the corresponding factor is omitted. Among a prescribed grid of kernel families and shape parameters, we choose the tuple minimizing the validation objective \(E_{Z_{\mathrm{val}}}(m^\star)\) for a fixed \(m^\star\). For each factor of the product kernel \eqref{eq:product_kernel_background}, we consider radial kernels of the form
\[
\hat k(\xi,\xi';\varepsilon)=\kappa(\varepsilon r),
\qquad
r=\|\xi-\xi'\|_2,
\]
including the inverse multiquadric (IMQ), Matérn \(3/2\), and Matérn \(5/2\)
kernels:
\[
\kappa_{\mathrm{IMQ}}(r)
=
\frac{1}{\sqrt{1+r^2}},
\qquad
\kappa_{\mathrm{M32}}(r)
=
(1+r)\exp(-r),
\qquad
\kappa_{\mathrm{M52}}(r)
=
\left(1+r+\tfrac13 r^2\right)\exp(-r).
\]

\subsection{Parameter-dependent discretized wave equation}\label{sec:wave}

We consider the one-dimensional wave equation on a finite interval. For \(t\in I=(0,T)\) and spatial variable \(\zeta\in D:=(0,L)\), we seek
\[
u:\bar I\times\bar D\to\R
\]
such that
\begin{align*}
u_{tt}(t,\zeta) &= c^2 u_{\zeta\zeta}(t,\zeta) && \text{in } I\times D,\\
u(t,\zeta) &= 0 && \text{on } I\times\partial D,\\
u(0,\zeta) &= u_0(\zeta),\quad
u_t(0,\zeta)=v_0(\zeta) && \text{in } D,
\end{align*}
where the wave speed \(c\) is treated as the physical parameter. In the experiments, we set \(l=1\) and \(T=6.0\), and use
\[
\mu:=c^2
\]
as the parameter in the surrogate. The training wave speeds are
\[
c_{\mathrm{train}}=\{0.20,0.22,0.24,0.26,0.28,0.30\},
\qquad
\mu_{\mathrm{train}}=\{c^2:\ c\in c_{\mathrm{train}}\}.
\]

We discretize \(D\) by a uniform grid with \(N=1000\) interior points and impose homogeneous Dirichlet boundary conditions at \(\zeta\in\{0,L\}\). Let \(D_{\zeta\zeta}\in\R^{N\times N}\) denote the standard symmetric positive definite central-difference matrix for \(-\partial_{\zeta\zeta}\). With \(u(t)\in\R^N\) denoting the vector of nodal values, the semi-discrete system becomes
\[
\ddot u(t)=- \mu D_{\zeta\zeta}u(t).
\]
Introducing the phase-space state
\[
x(t)=
\begin{bmatrix}
q(t)\\ p(t)
\end{bmatrix}
=
\begin{bmatrix}
u(t)\\ \dot u(t)
\end{bmatrix}
\in\R^{2N},
\]
we obtain the quadratic Hamiltonian
\[
\Ham(x;\mu)
=
\tfrac12 p^\top p+\tfrac12 \mu q^\top D_{\zeta\zeta}q,
\]
and hence the canonical Hamiltonian system
\[
\dot x(t)
=
J_{2N}H(\mu)x(t),
\qquad
J_{2N}=
\begin{bmatrix}
0&I_N\\
-I_N&0
\end{bmatrix},
\qquad
H(\mu)=
\begin{bmatrix}
\mu D_{\zeta\zeta} & 0\\[2pt]
0 & I_N
\end{bmatrix}.
\]
To reduce the \(2N\)-dimensional system while preserving symplectic structure, we employ symplectic model order reduction \cite{benner2020model,afkham2017structure}. We project onto a symplectic subspace spanned by
\[
V\in\R^{2N\times 2n},
\qquad
n\ll N,
\qquad
V^\top J_{2N}V=J_{2n},
\]
with symplectic inverse
\[
V^+:=J_{2n}^\top V^\top J_{2N}.
\]
The reduced coordinates \(x_{\mathrm{red}}(t)=V^+x(t)\) then satisfy
\[
\dot x_{\mathrm{red}}(t)
=
J_{2n}\bigl(V^\top H(\mu)V\bigr)x_{\mathrm{red}}(t),
\qquad
x_{\mathrm{red}}(0)=V^+x(0),
\]
with reduced Hamiltonian
\[
\Ham_{\mathrm{red}}(x_{\mathrm{red}};\mu)
=
\tfrac12 x_{\mathrm{red}}^\top\bigl(V^\top H(\mu)V\bigr)x_{\mathrm{red}}.
\]
The basis \(V\) is constructed by the complex SVD (cSVD) from snapshot data of a full-order reference model at $
c_{\mathrm{ref}}=0.3$, using low-frequency, energy-bounded initial states as in \cite{herkert2026kernel}. We retain the reduced dimension $
n=2,$ so that the reduced phase space has dimension \(2n=4\).

After constructing the basis, we generate training data directly in reduced coordinates. For each fixed macro time-step $
\Delta T\in\{0.1,0.05,0.025\}$
and each \(\mu\in\mu_{\mathrm{train}}\), we sample $N_s=20000$ reduced initial states from an energy-bounded subset of the reduced phase space. For each sample \((x_0,\mu)\), we compute the flow $\Phi_\mu^{\Delta T}(x_0)$
and form the data set
\[
\mathcal M_{\mathcal P}^{\Delta T}
:=
\left\{
\left(
\bigl(I_1 x_0 + I_2 \Phi_\mu^{\Delta T}(x_0),\,\mu\bigr),\,
J_{2n}^{\top}\frac{\Phi_\mu^{\Delta T}(x_0)-x_0}{\Delta T}
\right)
:
\mu\in\mu_{\mathrm{train}},\ x_0\in X_{\mathrm{train}}(\mu)
\right\}.
\]
Accordingly, the surrogate is trained on \(\Omega_\xi\times\mathcal P\), where $\xi=(q_0,p_{\Delta T})\in\R^{2n}.$

For testing, we draw for each test wave speed \(c_{\mathrm{test}}\) five initial conditions independently from the same energy-bounded region of reduced phase space. Each test trajectory is evolved up to \(T=6.0\) using
(i) the proposed symplectic kernel predictor,
(ii) the implicit midpoint rule with macro time-step \(\Delta T\), and
(iii) the micro-step reference solution. The relative errors are averaged over the five test initial conditions for each pair \((\Delta T,c_{\mathrm{test}})\).

As a first result, \Cref{fig:fgreedy-convergence-wave} summarizes both the interpolation and prediction performance. The left panel shows the decay of the \(f\)-greedy training and validation residuals as functions of the number of selected centers for the three macro time-step sizes, while the right panel compares the corresponding average relative trajectory errors of the learned symplectic predictor with those of the implicit-midpoint baseline for randomly chosen test wave speeds.
\begin{figure}[h]
 \centering
 \begin{subfigure}[t]{0.64\textwidth}
  \centering
  \includegraphics[width=\textwidth]{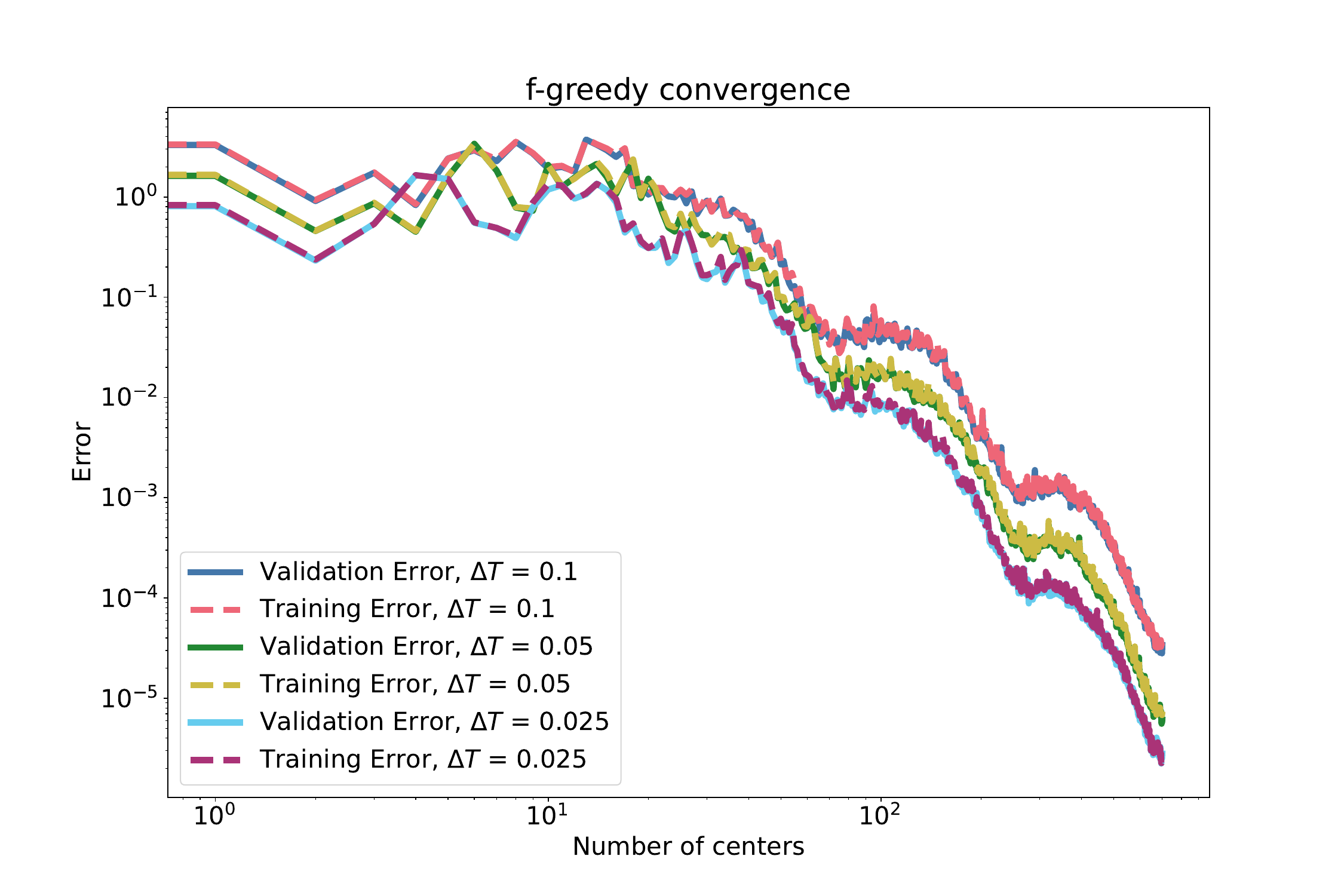}
  \caption{$f$-greedy interpolation error versus the number of selected centers (wave model), training (solid) and validation (dashed).}
  \label{fig:fgreedy-convergence-wave}
 \end{subfigure}\hfill
 \begin{subfigure}[t]{0.35\textwidth}
  \centering
  \includegraphics[width=\textwidth]{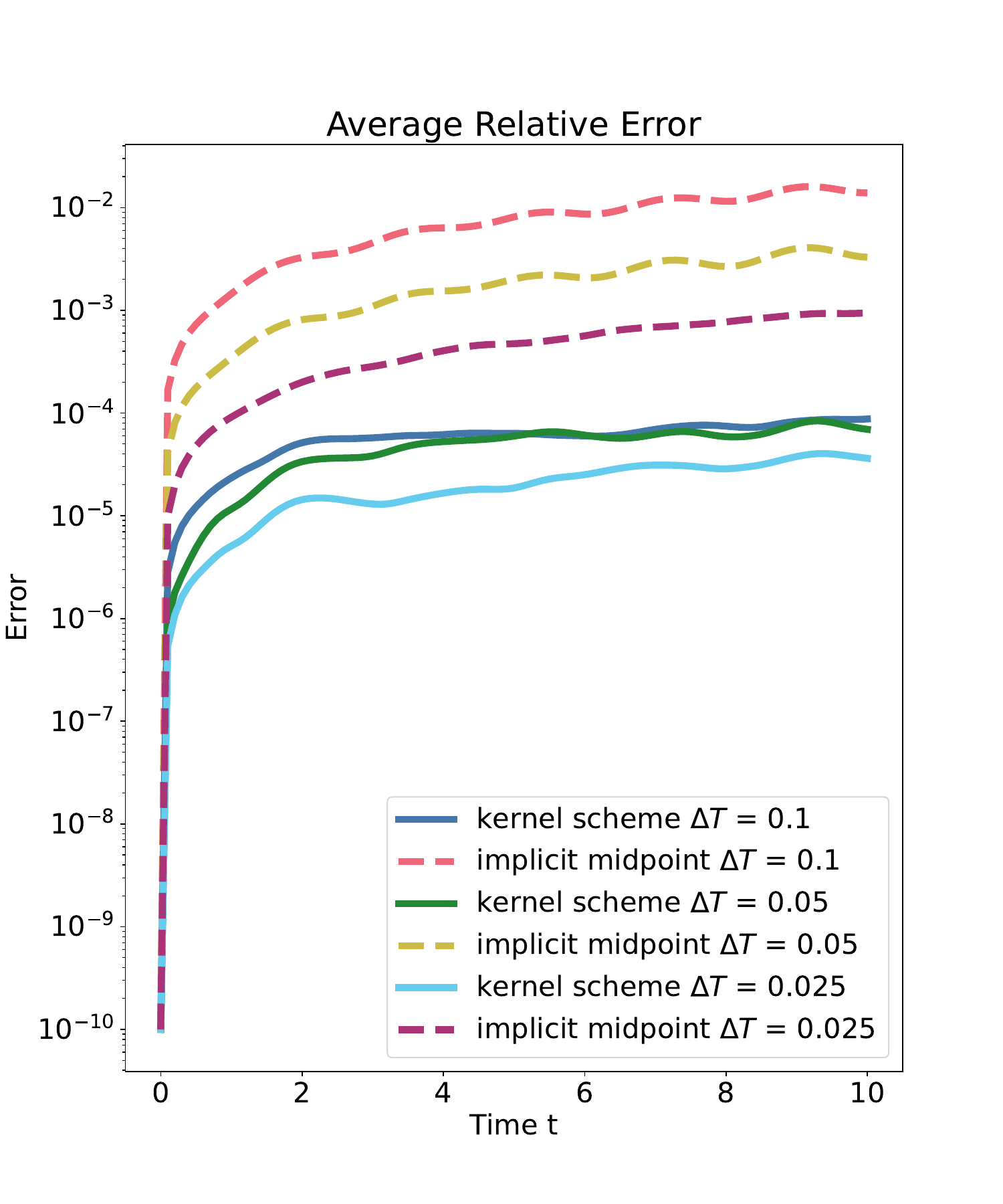}
  \caption{Average relative error over time (wave model), kernel predictor (solid) vs.\ implicit midpoint (dashed).}
  \label{fig:rel-reduction-error-wave}
 \end{subfigure}
 \caption{Reduced 1D wave model: (a) \(f\)-greedy residual over
the number of selected centers; (b) average relative trajectory error over
time.}
\end{figure} The training and validation residuals in \Cref{fig:fgreedy-convergence-wave} exhibit the same overall qualitative behavior for all three macro time-steps: after a short pre-asymptotic phase for small numbers of centers, the errors decay steadily over several orders of magnitude, with the validation curves remaining close to the corresponding training curves throughout. This indicates stable approximation behavior across the parameter range used for training and suggests a convergence behavior that is at least algebraic decay.
The averaged trajectory errors are shown in \Cref{fig:rel-reduction-error-wave}. For each macro time-step and each randomly chosen test wave speed, the kernel predictor consistently yields smaller errors than the implicit midpoint rule with macro time step $\Delta T$. Overall, the figure shows that the product-kernel surrogate captures the
reduced wave dynamics accurately over the tested parameter interval and
provides a clear improvement over direct macro-step integration by the
implicit midpoint rule.

\subsection{Pendulum with varying length and time-step size}\label{sec:pendulum_param_time}

As a test of the full augmented setting, we consider the mathematical
pendulum, treating both the pendulum length and the macro time-step size as
variables in the surrogate. The Hamiltonian is
\[
\Ham(q,p;l)
=
\frac{p^2}{2ml^2}+mgl(1-\cos q),
\qquad
m=1,\quad g=9.81,\quad l\in\mathcal P.
\]
We fix the final time \(T=20.0\) and consider the parameter domain 
$\mathcal P=[0.7,0.8]$. As parameter in the surrogate, we use $\mu:=l.$
In contrast to \Cref{sec:wave}, we train a single surrogate on the full
augmented domain involving the mixed state, the length parameter, and the macro
time-step size.

The training parameter values and macro time-step sizes are
\[
\mu_{\mathrm{train}}
=
\{0.7,\ 0.725,\ 0.75,\ 0.775,\ 0.8\},
\qquad
\mathcal T_{\mathrm{train}}
=
\{0.04,\ 0.05,\ 0.06\}.
\]
For every \(\mu\in \mu_{\mathrm{train}}\) and every
\(\Delta T\in\mathcal T_{\mathrm{train}}\), initial states
\(x_0=(q_0,p_0)\) are sampled from an energy-bounded subset of the phase space,
following the sampling strategy of \cite{herkert2026kernel}. To this end, we
define the box
\[
\hat D
:=
[-\pi,\pi]\times\left[-2\sqrt{g},2\sqrt{g}\right],
\]
and the energy-bounded domain
\[
D
:=
\left\{(q,p)\in\hat D:\ \mathcal H(q,p)<2g\right\}.
\]
In practice, we generate a uniform tensor-product grid \(X\) on
\(\hat D\) with \(200\times 200\) grid points and retain only the
energy-admissible points. The resulting set of sampled initial states is
denoted by \(X_{0,\mathrm{train}}\).

We consider two sampling scenarios. In the first, we define the full training index set
$D_{\mathrm{train}}
:=
\mu_{\mathrm{train}}
\times
\mathcal T_{\mathrm{train}}
\times
X_{0,\mathrm{train}}.$
Using all sampled energy-admissible initial states gives the training set
\[
\mathcal M_{\mathrm A}
:=
\left\{
\left(
\bigl(I_1x_0+I_2\Phi_\mu^{\Delta T}(x_0),\,\mu,\,\Delta T\bigr),
J_2^\top
\frac{\Phi_\mu^{\Delta T}(x_0)-x_0}{\Delta T}
\right)
:
(\mu,\Delta T,x_0)\in D_{\mathrm{train}}
\right\}.
\]
In the second scenario, we restrict the sampled initial states to $D^-
:=
\{(q,p)\in\Omega:\ p\le 0\}.$
In particular, we use the restricted training index set
$
D_{\mathrm{train}}^-
:=
D_{\mathrm{train}}
\cap
\bigl(
\mu_{\mathrm{train}}
\times
\mathcal T_{\mathrm{train}}
\times
D^-
\bigr).$
This yields
\[
\mathcal M_{\mathrm B}
:=
\left\{
\left(
\bigl(I_1x_0+I_2\Phi_\mu^{\Delta T}(x_0),\,\mu,\,\Delta T\bigr),
J_2^\top
\frac{\Phi_\mu^{\Delta T}(x_0)-x_0}{\Delta T}
\right)
:
(\mu,\Delta T,x_0)\in D_{\mathrm{train}}^-
\right\}.
\]
Testing is performed on five random initial conditions with
\[
q_0\sim\mathcal U([0,\pi/2]),
\qquad
p_0=0,
\]
and on five additional parameter values drawn uniformly from \([0.7,0.8]\).
Each trajectory is evolved up to \(T=20.0\). We compare
(i) the proposed symplectic kernel predictor,
(ii) the implicit midpoint rule applied directly with macro step \(\Delta T\),
and
(iii) the micro-step reference solution.
For every considered value of \(\Delta T\), the relative errors are averaged
over the sampled initial conditions and test parameter values.

As a first result, \Cref{fig:pendulum} summarizes both the interpolation and prediction performance of the fully parameter-augmented surrogate. The left panel shows the decay of the $f$-greedy training and validation residuals as functions of the number of selected centers, while the right panel compares the average relative trajectory error of the symplectic predictor with the implicit-midpoint baseline for three randomly chosen macro time-step sizes, namely \(\Delta T\in \mathcal{T}_\textrm{test}:=
\{0.0509, 0.04847, 0.05292\}\). \begin{figure}[h]
 \centering
 \begin{subfigure}[t]{0.6\textwidth}
  \centering
\includegraphics[width=\linewidth]{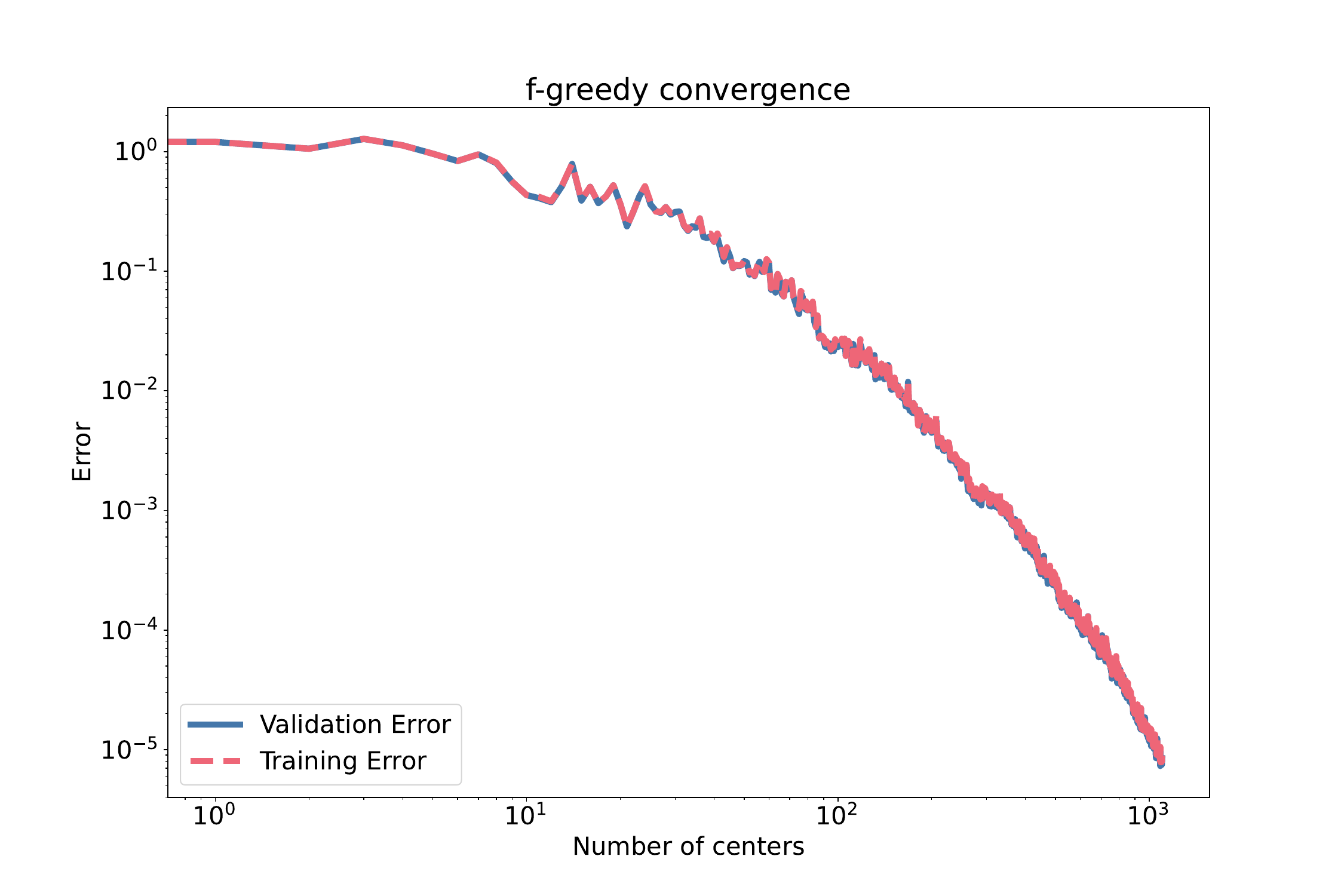}
  \caption{$f$-greedy interpolation error versus the number of selected centers for the full parameter-augmented surrogate, showing training (solid) and validation (dashed) curves.}
  \label{fig:fgreedy-convergence-param-time}
 \end{subfigure}\hfill
 \begin{subfigure}[t]{0.39\textwidth}
  \centering
  \includegraphics[width=\linewidth]{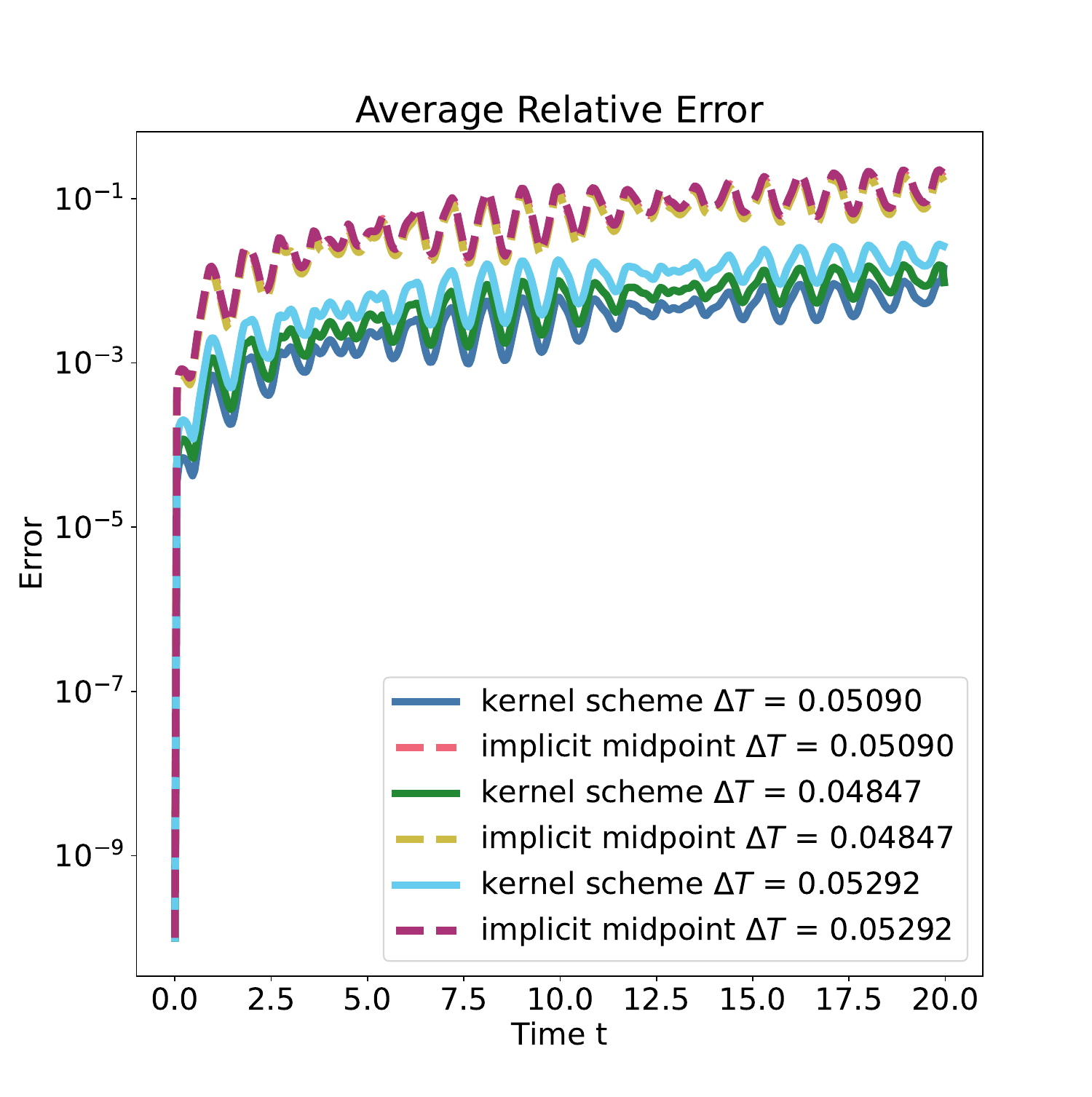}
  \caption{Relative error over time comparing the symplectic kernel scheme (solid) with the implicit-midpoint baseline (dashed) for three representative macro time-step sizes.}
  \label{fig:rel-reduction-error-param-time}
 \end{subfigure}
 \caption{Pendulum scenario $\mathcal{A}$: (a) \(f\)-greedy residual over the
number of centers; (b) average relative error over time.}\label{fig:pendulum}
\end{figure} The residual curves in \Cref{fig:fgreedy-convergence-param-time} exhibit a short pre-asymptotic regime for very small numbers of selected centers, followed by a steady decay over approximately five orders of magnitude. Starting from values of order \(10^{0}\), both the training and validation errors remain near that level up to roughly \(m\approx 5\)–\(10\), after which they enter an almost linear regime on the log--log scale. By the largest numbers of selected centers, the errors are reduced to about \(10^{-5}\). Throughout the entire range of \(m\), the two curves are almost indistinguishable, which again indicates stable interpolation behavior and no visible overfitting in the joint state--parameter--time setting. The corresponding trajectory errors are shown in \Cref{fig:rel-reduction-error-param-time}. Over the full time interval \([0,20]\), all three kernel-scheme curves remain bounded and oscillatory, with errors ranging roughly from \(10^{-5}\) up to a few \(10^{-3}\). The implicit-midpoint curves display the same qualitative oscillatory behavior, but at substantially larger error levels, typically between \(10^{-3}\) and \(10^{-1}\). Thus, for each of the displayed macro time-step sizes, the learned predictor improves upon the structure-preserving baseline by about two orders of magnitude. Overall, the figure shows that, within the training domain, the full product-kernel surrogate retains the bounded long-time behavior characteristic of symplectic approximations while achieving clearly smaller trajectory errors than direct macro-step integration.

We next turn to a more demanding generalization test. In contrast to the previous experiment, where both the pendulum length and the macro time-step remain inside the training domain, we now reduce the amount of training data and evaluate the learned predictor on parameter instances outside the sampled range. This allows us to assess how robustly the full parameter-augmented surrogate extrapolates in the length parameter.
In
\Cref{fig:rel-reduction-error-param-time-gen} the predictors are
evaluated in the extrapolatory regime. As a non-structure-preserving baseline, we compare to a direct kernel model which learns the coarse flow map itself, i.e.,
\[
s_{\mathrm{dir}}:\ (x_k,\mu,\Delta T)\mapsto \widehat x_{k+1}\approx \Phi_\mu^{\Delta T}(x_k),
\]
without embedding the prediction into an implicit symplectic update.\begin{figure}[h]
 \centering
 \begin{subfigure}[t]{0.32\textwidth}
  \centering
  \includegraphics[width = \textwidth]{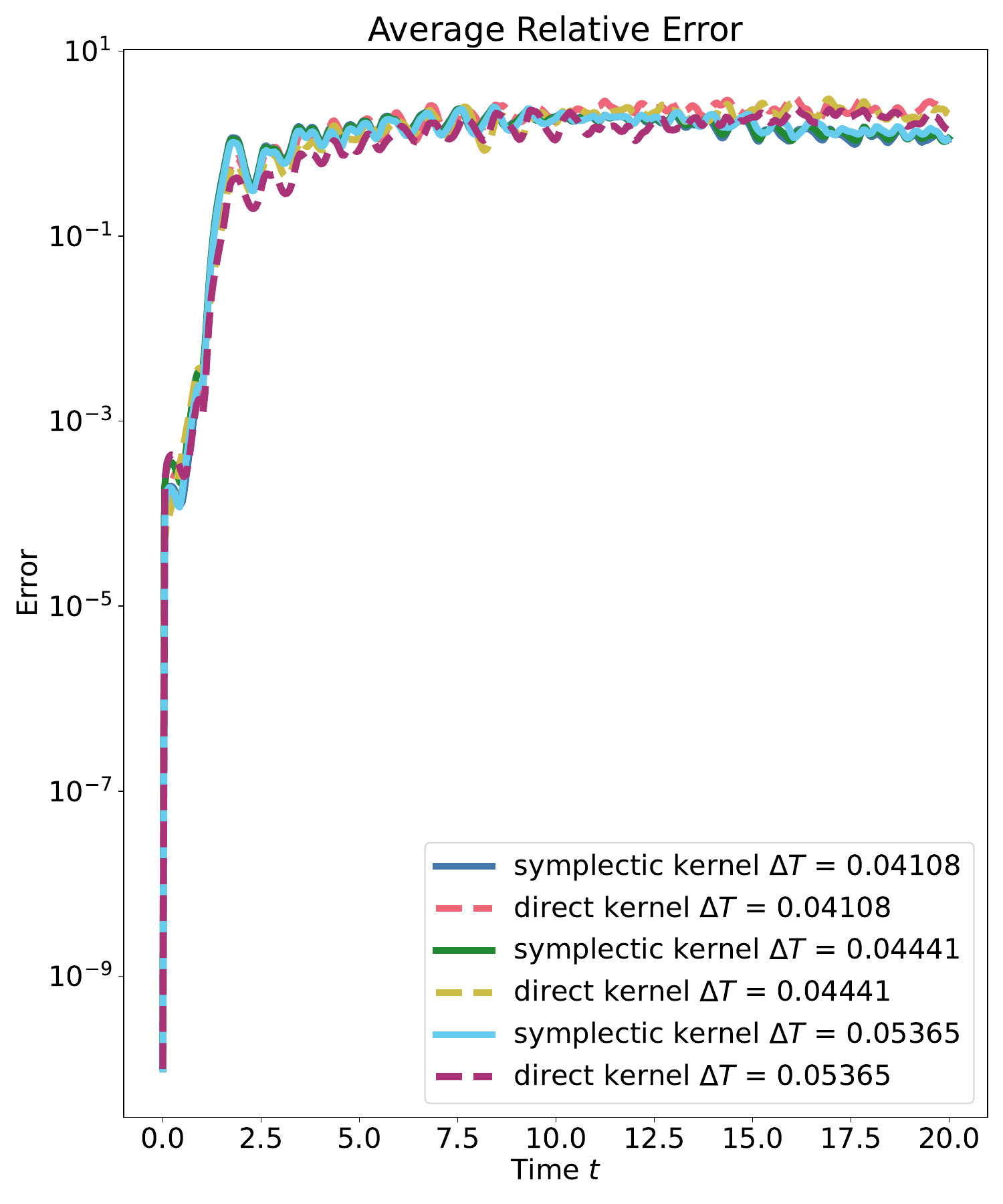}
  \caption{Relative error over time comparing the symplectic kernel scheme (solid) with the direct kernel baseline (dashed) in the generalization test.}
  \label{fig:rel-reduction-error-param-time-gen}
 \end{subfigure}\hfill
 \begin{subfigure}[t]{0.67\textwidth}
   \centering
  \includegraphics[width=\linewidth]{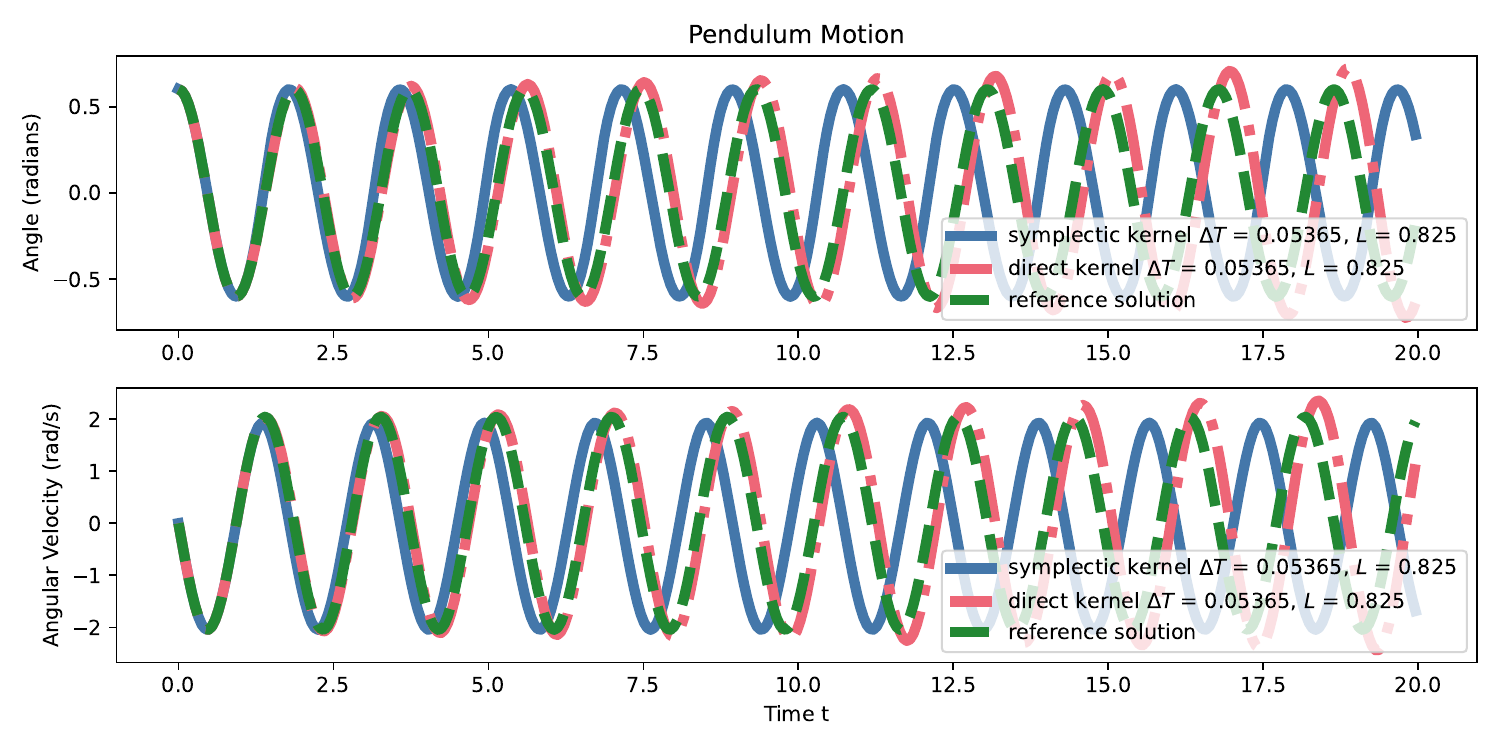}
  \caption{Pendulum parameter--time generalization test: comparison of one
  extrapolatory trajectory produced by the fully parameter-augmented
  symplectic kernel predictor and the direct kernel model with the
  reference solution for \(\Delta T=0.05365\) and \(\mu=0.825\).}
  \label{fig:pendulum-param-time-gen-trajectories}
 \end{subfigure}
\caption{Pendulum scenario $\mathcal{B}$: (a) average relative error over time;
(b) comparison of trajectories.}\label{fig:pendulum_param-time-gen}
\end{figure}
For all three representative macro time
step sizes, the errors of the symplectic kernel predictor start from very small
values near \(t=0\) and then increase before settling into bounded oscillatory
regimes where the direct kernel model and the symplectic predictor attain similar relative trajectory error. To illustrate the extrapolatory regime in more detail,
\Cref{fig:pendulum-param-time-gen-trajectories} compares one trajectory produced
by the fully parameter-augmented symplectic kernel predictor and the standard
direct kernel model with the corresponding reference solution. The test case is
given by \(\Delta T=0.05365\) and \(\mu=0.825\), and is therefore outside the
sampled range in the parameter variables. The symplectic kernel predictor reproduces the oscillatory motion with almost
constant amplitude over the full time interval. Although a phase shift relative
to the reference solution is visible, the extrema of both the angle \(q(t)\) and
the angular momentum \(p(t)\) remain at approximately the correct magnitude.
Thus, the structure-preserving predictor does not introduce a systematic change
of the energy level and retains the qualitative mechanics of the conservative
pendulum system. The direct kernel model behaves differently. Its oscillation amplitude grows over time, most clearly in
the angular momentum component. This amplitude growth is unphysical for the
conservative pendulum system and indicates that the direct model does not
preserve the underlying structure. Hence, the comparison shows a
clear qualitative distinction: the main error of the symplectic predictor is a
phase error, whereas the direct kernel model additionally introduces an
unphysical amplitude drift.

\begin{figure}[h]
 \centering
 \begin{adjustbox}{valign=t}
 \begin{minipage}{0.55\textwidth}
  \centering
  \includegraphics[width=\linewidth]{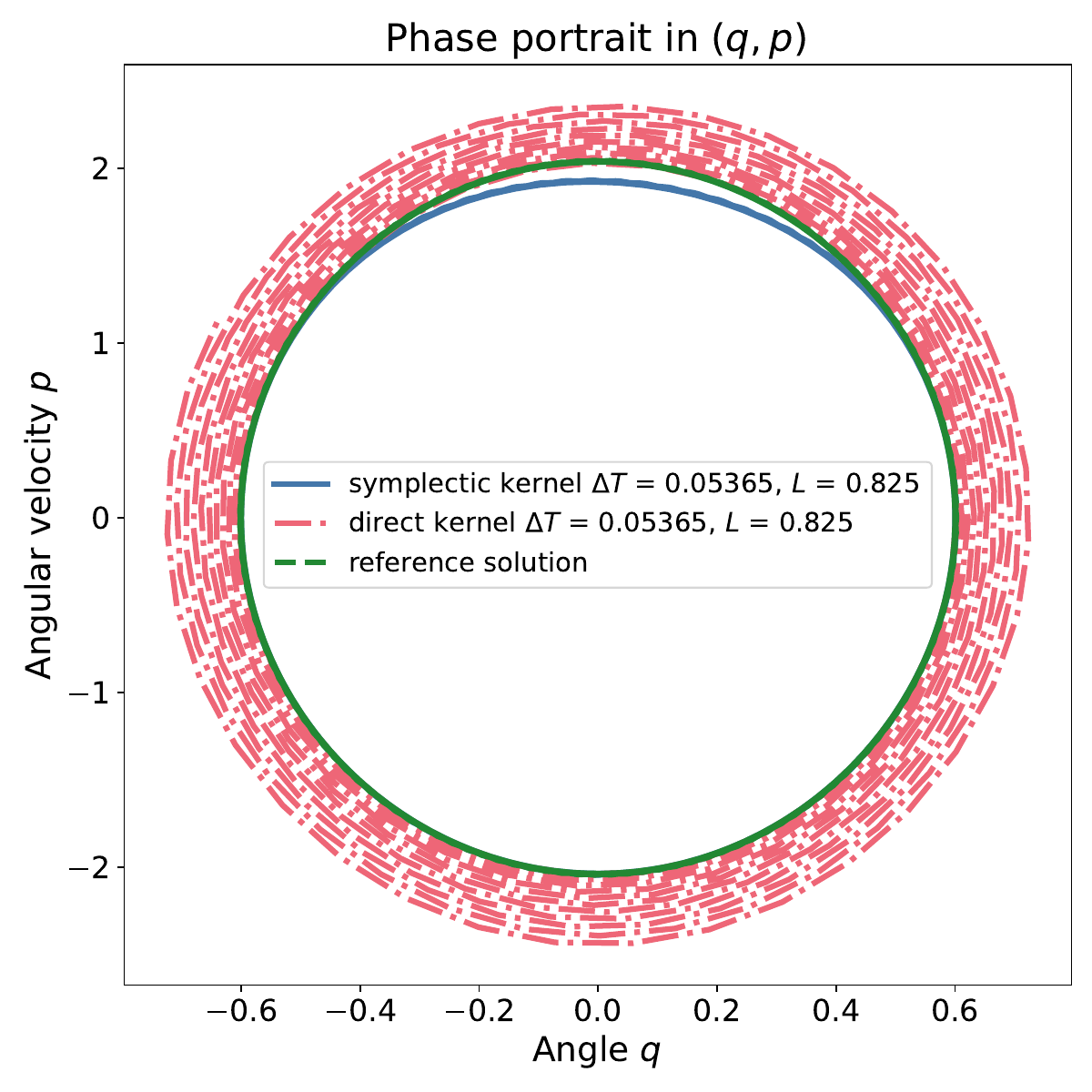}
  \subcaption{Phase portrait in the \((q,p)\)-plane.}
  \label{fig:pendulum-param-time-gen-phase}
 \end{minipage}
 \end{adjustbox}
 \hfill
 \begin{adjustbox}{valign=t}
 \begin{minipage}{0.43\textwidth}
  \centering
  \includegraphics[width=\linewidth]{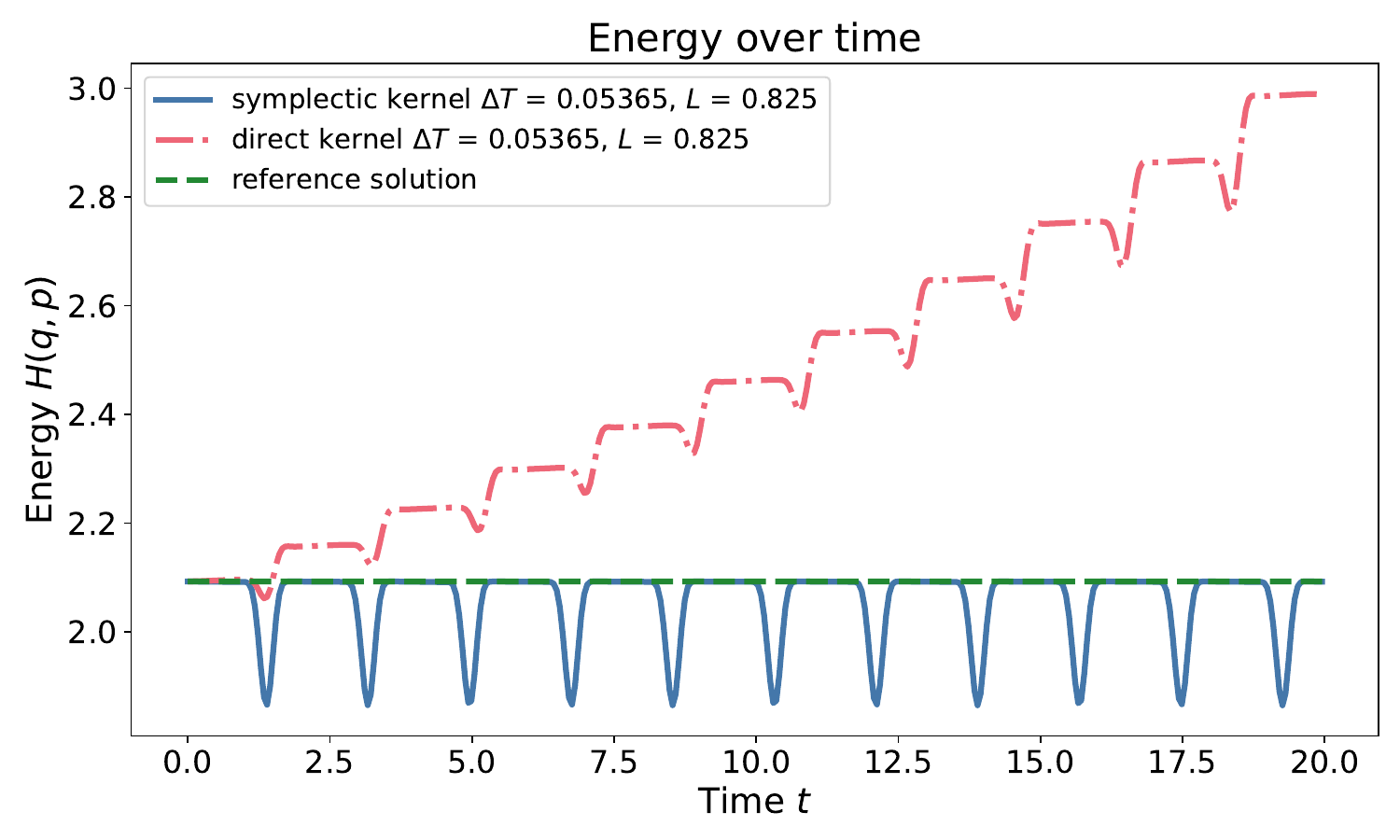}
  \subcaption{Hamiltonian along the trajectory.}
  \label{fig:pendulum-param-time-gen-hamiltonian}

  \vspace{0.8em}

  \includegraphics[width=\linewidth]{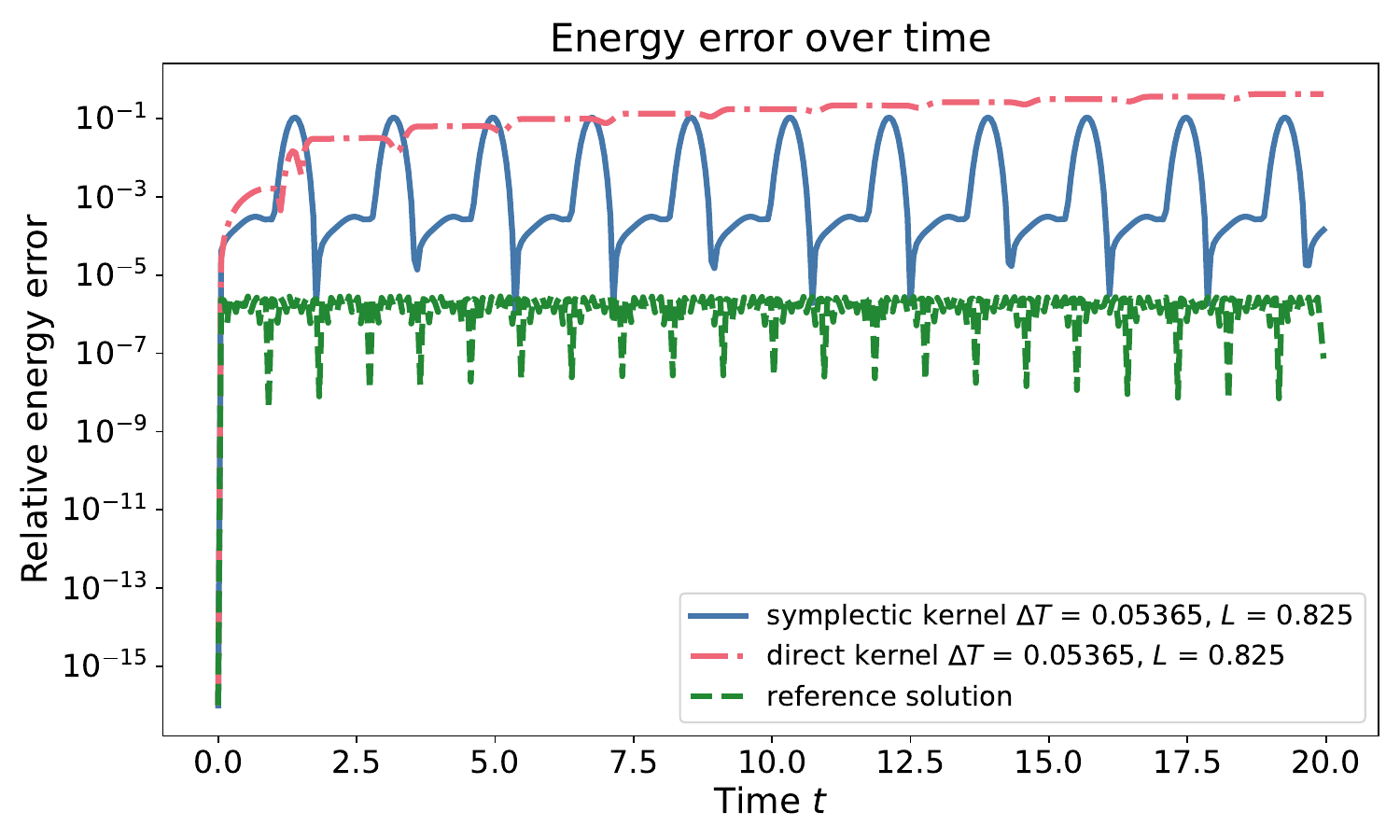}
  \subcaption{Relative Hamiltonian error over time.}
  \label{fig:pendulum-param-time-gen-energy-error}
 \end{minipage}
 \end{adjustbox}

\caption{Pendulum scenario \(\mathcal B\) for \(\mu=0.825\) and
\(\Delta T=0.05365\): (a) phase portrait, (b) Hamiltonian evolution, and
(c) relative Hamiltonian error for the fully parameter-augmented symplectic
kernel predictor, the direct kernel model, and the micro-step reference
solution.}
 \label{fig:pendulum-param-time-gen-energy-phase}
\end{figure}

The qualitative observations from the trajectory comparison are further confirmed by \Cref{fig:pendulum-param-time-gen-energy-phase}, which shows the phase portrait, Hamiltonian evolution, and relative Hamiltonian error for the same extrapolatory test case. In particular, the symplectic kernel predictor remains close to the reference orbit and exhibits no systematic Hamiltonian drift. Instead, its Hamiltonian shows small bounded oscillations around the reference value. This is consistent with symplectic approximations, which generally preserve a modified Hamiltonian rather than the original one exactly. By contrast, the direct kernel model drifts outward in the phase space, corresponding to an artificial increase of the Hamiltonian. The energy plots confirm this behavior: the direct model gains energy in a step-like manner and produces substantially larger Hamiltonian errors. This nonphysical energy growth indicates that the direct kernel surrogate does not preserve the underlying Hamiltonian structure. The micro-step reference solution exhibits only a very small relative Hamiltonian error, confirming the accuracy of the highly resolved structure-preserving reference integration. Hence, for a length parameter outside the training interval, the symplectic
kernel construction preserves the qualitative mechanics of the pendulum
considerably more reliably than the direct kernel model.

\section{Conclusion and Outlook}\label{Sec:Conclusion}

In this work, we extended the kernel-based symplectic predictor of \cite{herkert2026kernel} to a parameter-augmented setting for autonomous parameter-dependent Hamiltonian systems. The core idea is to learn a scalar surrogate on an augmented domain of mixed state variables, physical parameters, and macro time-step variables, and to insert its state gradient into an implicit symplectic-Euler-type update. This yields a family of large-step prediction maps that are symplectic by construction for every admissible parameter and time-step instance. In this way, the proposed approach combines direct flow-map learning over large macro time steps with preservation of the canonical symplectic structure.

From the analytical point of view, we formulated training as gradient HB interpolation in an RKHS on the augmented domain. Parameter and time-step dependence are incorporated through a product-kernel construction, while differentiation is taken only with respect to the state variables. Sparse surrogates are obtained by greedy center selection. Moreover, we showed that the convergence analysis from the non-augmented setting carries over to the product-kernel framework and yields quantitative bounds for both the state-gradient approximation and the resulting prediction error.

From the algorithmic point of view, we combined the proposed predictor with structure-preserving model reduction in order to treat larger systems efficiently. In particular, the reduced wave-equation example illustrates that symplectic model reduction and the parameter-augmented kernel predictor interact naturally: the reduced model remains Hamiltonian, the learned large-step surrogate preserves symplecticity, and both offline training and online evaluation become feasible in low dimension. The numerical experiments confirm the effectiveness of the proposed framework. The parameter-dependent reduced wave equation exhibits stable error decay and high accuracy across the tested parameter range. In particular, the product-kernel surrogate significantly reduces the trajectory error compared with direct macro-step integration. The pendulum examples show similar behavior: the greedy procedure yields surrogates with stable error decay, and within the interpolation regime the learned predictor clearly outperforms the structure-preserving implicit-midpoint baseline at the same macro step size.

Several directions for future work appear particularly promising. On the algorithmic side, it would be natural to generalize the present construction from the symplectic-Euler-type update to higher-order symplectic Runge--Kutta methods. This could improve accuracy for large macro steps while retaining the structure-preserving character of the learned predictor. On the theoretical side, it would be desirable to go beyond the native-space setting and develop approximation results for symplectic maps or generating functions outside the RKHS induced by the chosen kernel. Such a theory would help clarify the expressive power of kernel-based symplectic predictors and their approximation capabilities beyond the current interpolation framework. Furthermore, a natural next step would be to extend the existence results for the target function, namely the type~II generating function, to the augmented setting. Beyond these two directions, it also seems worthwhile to apply the present framework to broader classes of large-scale nonlinear Hamiltonian PDE models.

\section*{Aacknowledgements}
Funded by Deutsche Forschungsgemeinschaft (DFG,
German Research Foundation) under Germany´s
Excellence Strategy – EXC 2075/2 – 390740016.

\end{document}